\newtheorem{lem}{Lemma}[section]
\newtheorem{thm}[lem]{Theorem}
\newtheorem{prob}[lem]{Problem}
\newtheorem{claim}[lem]{\indent Claim}
\newtheorem{conj}[lem]{Conjecture}
\begin{document}
\title{Equating three degrees of graphs}
\date{}
%%%%%%%%%%%%%%%%%%%%%%%%%%%%%%%%%%%%%%%%%%%%%%%%%%%%
\author{Zhen Liu\footnote{Email: 1552580575@qq.com}, ~~Qinghou Zeng\footnote{Research partially supported by National Key R\&D Program of China (Grant No. 2023YFA1010202) and National Natural Science Foundation of China (Grant No. 12371342). Email: zengqh@fzu.edu.cn (Corresponding author)}\\
{\small Center for Discrete Mathematics, Fuzhou University, Fujian, 350003, China}}
%%%%%%%%%%%%%%%%%%%%%%%%%%%%%%%%%%%%%%%%%%%%%%%%%%%%%

%%%%%%%%%%%%%%%%%%%%%%%%%%%%%%%%%%%%%%%%%%%%%%%%%%%
\maketitle

\maketitle
\begin{abstract}
In this paper, we prove that, for every graph with at least 5 vertices, one can delete at most 3 vertices such that the subgraph obtained has at least three vertices with the same degree. This solves an open problem of Caro, Shapira and Yuster [Electron. J. Combin. 21 (2014) P1.24].
%For any positive integer $k$, let $C(k)$ denote the least integer such that any $n$-vertex graph has an induced subgraph with at least $n-C(k)$ vertices, in which at least $\min\{k, n-C(k)\}$ vertices are of the same degree. Caro, Shapira and Yuster initially studied this parameter and showed that $\Omega(k\log k)\le C(k)\le (8k)^k$. For the first nontrivial case, the authors proved that $3\leq C(3)\leq6$. In this paper, we prove that $ C(3)=3$, improving the former result as well as a recent result of Sun.
\end{abstract}
\textbf{Keywords}: induced subgraph, repeated degree, balanceable set, feasible set
\section{Introduction}\label{Intro}
All graphs considered here are finite, simple and undirected, unless otherwise specified.
%One of the most basic results in graph theory states that every graph with at least two vertices has two vertices with the same degree.
For a graph $G$, the \emph{repetition number}, denoted by $\text{rep}(G)$, is the maximum multiplicity of a vertex degree in $G$. Trivially, $\text{rep}(G)\ge2$ for any graph $G$ with at least two vertices. There are also simple constructions showing that the equality holds for infinitely many graphs. Repetition numbers of graphs and hypergraphs have been widely studied by various researchers, see \cite{BBL, Bol, BS1997, CHP, CHPSW, CW2009, EFR}.

Since there are infinitely many graphs having repetition number two, Caro, Shapira and Yuster \cite{CSY} asked that  what is the smallest number of vertices one needs to delete from a graph in order to increase the repetition number of the remaining induced subgraph to 3 or higher. For any positive integer $k$, let $C(k)$ denote the least integer such that any $n$-vertex graph has an induced subgraph with at least $n-C(k)$ vertices whose repetition number is at least $\min\{k, n-C(k)\}$. Trivially, $C(1)=C(2)=0$. Caro, Shapira and Yuster  \cite{CSY} established that $\Omega(k\log k)\le C(k)\le (8k)^k$ for any $k\ge3$.
%The authors asked for closing the exponential gap between the upper bound and lower bound.
Specially, for the first nontrivial case $C(3)$, the authors \cite{CSY} proved that $C(3)\le6$, and the exact value was left as an open problem.
\begin{prob}[Caro, Shapira and Yuster \cite{CSY}]\label{Prob}
Determine the exact value of $C(3)$.
\end{prob}

\begin{figure}
    \centering
    \includegraphics[width=0.25\linewidth]{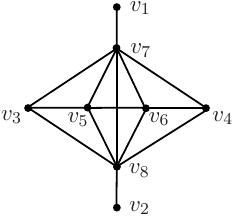}
    \caption{$C(3)\geq 3$}%. The deleting of any two vertices does not yield a graph with repetition number at least 3; however, the deleting of $v_1 , v_2$ and $v_3$ equate the degrees of $v_6, v_7$ and $v_8$ in the resulting graph.}
    \label{fig}
\end{figure}
Recently, the upper bound of $C(3)$ was continuously improved by Kogan \cite{Kog}, and by Sun, Hou and Zeng \cite{SHZ}. In this paper, we consider Problem \ref{Prob} and determine that $C(3)=3$. We mention that the lower bound $C(3)\ge3$ (see Figure \ref{fig}) is constructed by Caro, Shapira and Yuster \cite{CSY}. In addition, the path on 4 vertices shows that $|V(G)|\ge5$ is necessary in the following theorem.
%It is easy to check that the deleting of any two vertices does not yield a graph with repetition number at least 3; however, if we delete the vertices $v_1 , v_2$ and $v_3$, then the vertices $v_6, v_7$ and $v_8$ have the same degree in the resulting graph.

\begin{thm}\label{Delete3}
For any graph $G$ with at least $5$ vertices, one can delete at most $3$ vertices such that the subgraph obtained has at least three vertices with the same degree. Consequently, $C(3)=3$.
\end{thm}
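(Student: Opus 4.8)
The plan is to fix a graph $G$ on $n\ge 5$ vertices and produce a set $S\subseteq V(G)$ with $|S|\le 3$ such that $G-S$ has three vertices of equal degree. If $\mathrm{rep}(G)\ge 3$ I take $S=\emptyset$, so I assume throughout that every degree value of $G$ is realized by at most two vertices. Two reductions help. Since $\deg_{\bar G}(v)=(n-1)-\deg_G(v)$ and $(\bar G)[T]=\overline{G[T]}$ for all $T\subseteq V(G)$, the value $\mathrm{rep}(G-S)$ is invariant under complementation, so I may replace $G$ by $\bar G$ whenever convenient. Writing $m(i)$ for the number of vertices of degree $i$, the relations $\sum_i m(i)=n$, $m(i)\le 2$, and $m(0)\,m(n-1)=0$ force the support of $m$ to have size at most $n-1$; hence at least one value is doubled, and there is always a pair $\{u,v\}$ of equal-degree vertices to start from.

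The engine is a merging operation, for which I introduce two notions. Relative to a pair $\{u,v\}$ with $\deg_G u=\deg_G v$, call $S$ \emph{balanceable} if $u,v\notin S$ and $|N(u)\cap S|=|N(v)\cap S|$, so that $u$ and $v$ keep a common degree in $G-S$; and call any $S$ with $\mathrm{rep}(G-S)\ge 3$ \emph{feasible}. The key single-vertex merge is: if $c$ is a doubled value with vertices $p,q$, if some $z\notin\{p,q\}$ is adjacent to both $p$ and $q$, and if some vertex $w$ of degree $c-1$ is non-adjacent to $z$, then in $G-z$ the three vertices $p,q,w$ all have degree $c-1$, so $\{z\}$ is feasible. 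Complementation turns this into the dual downward merge (a common non-neighbour of $p,q$ together with a degree-$(c+1)$ neighbour). More generally I would use balanceable sets of size at most two to slide a chosen pair onto the degree of a third vertex while controlling that third vertex's degree, reading off the net effect from the multiplicity function $m$.

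The remainder is a case analysis driven by the shape of $m$. If $G$ has two distinct doubled values, or a single doubled value whose two vertices admit a common neighbour and a witness at an adjacent degree, the merges above already yield a feasible set of size at most two. The hard case — and the main obstacle — is the rigid regime: a single doubled value together with the failure of every cheap merge. Here the requirements $m(i)\le 2$, $\sum_i m(i)=n$, and $\mathrm{rep}(G-S)\le 2$ for all $|S|\le 2$ pin the degree sequence down to a threshold-type pattern near the antiregular sequence $1,1,2,3,\dots,n-1$ (up to complementation), whose realization is essentially unique and highly structured. For such $G$ I would exhibit an explicit deletion of at most three vertices, splitting into a handful of subcases according to the adjacencies among the extreme-degree vertices and checking directly that a triple of equal degrees emerges; this is precisely where all three deletions can be necessary, matching the lower bound $C(3)\ge 3$ of Figure~\ref{fig}. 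The small base cases $n\in\{5,6\}$, where the counting slack is tightest, I would dispose of by the same merges or by direct inspection.
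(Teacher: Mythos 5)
Your proposal has a genuine gap, and it sits exactly where the real difficulty of the theorem lies. The pivotal assertion that ``if $G$ has two distinct doubled values \dots the merges above already yield a feasible set of size at most two'' is unjustified and false in general. Take four vertices of degrees $d,d,d+2,d+2$: your single-vertex merge (a common neighbour of the low pair missing a degree-$(d+1)$ witness) and its complementary dual need a vertex of degree $d+1$ or $d-1$ to exist and to have the right adjacencies, and nothing forces either; indeed the paper's hard case is precisely when no vertex of degree $d\pm1$ exists at all. This configuration is the technical heart of the paper: Lemma \ref{Main-Lem} is devoted to it, its conclusion is \emph{three} deletions (not two), and its proof requires a page of delicate structure (Claims \ref{312} and \ref{124}, the containment \eqref{412}, and the edge-counting inequality \eqref{W4}). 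The same failure occurs already for doubled values at distance one: with degrees $d,d,d+1,d+1$ and no vertex of degree $d-1$ or $d+2$, no cheap merge is evident, which is why the paper (Claim \ref{1}) does not merge but instead records that $d-1$ and $d+2$ are \emph{missing} degrees. Your ``rigid regime'' is also inverted: a single doubled value with $\mathrm{rep}(G)=2$ forces an antiregular-type sequence containing vertices of degrees $d-1,d,d,d+1$, and this is the \emph{easy} case --- the paper dispatches it in Claim \ref{0}, where either a balanceable triple exists (Lemma \ref{pqmaxpq}) or the four vertices induce a path (Lemma \ref{P4}) and one deletion suffices. The genuinely hard graphs, with several doubled values scattered through the degree sequence, are not structurally pinned down at all, so ``exhibit an explicit deletion \dots by direct inspection'' has nothing to grab onto.

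Beyond the misplaced case split, you are missing the global mechanism that makes any such case analysis exhaustive. The paper's endgame is a counting argument: assuming no three deletions suffice, each doubled degree forces nearby degrees to be absent (Claims \ref{0}, \ref{1}, \ref{2}, with a wrap-around convention modulo $n-1$ to handle the extremes $d\in\{1,n-1\}$); the absent degrees charged to distinct maximal clusters of doubled degrees are shown to be pairwise disjoint, yielding $|T|\ge|S|$; and this contradicts the identity $|T|=|S|-1$, which holds automatically when $\mathrm{rep}(G)=2$. Your plan to read ``the net effect from the multiplicity function $m$'' gestures at this, but you never state the forcing claims, never prove the disjointness that makes the count work, and never prove an analogue of Lemma \ref{pqmaxpq} bounding how many deletions your balanceable sets actually cost. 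As it stands the proposal is a plausible opening move (the complementation reduction and the single-vertex merge are correct) followed by a plan whose central dichotomy is wrong.
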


\noindent\textbf{Notation}. Let $G$ be a graph. For any $v\in V(G)$, denote $N_{G}(v)$ the set of \emph{neighbors} of $v$ in $G$ and $d_G(v)$ the \emph{degree} of $v$ in $G$. Write $N_{G}[v]=N_{G}(v)\cup\{v\}$. For any $S\subseteq V(G)$, let $G[S]$ denote the induced subgraph of $G$ on $S$.  For each $S, T\subseteq V(G)$, let $e_G(S,T)$ denote the number of edges of $G$ with one end in $S$ and the other end in $T$.  We will drop the reference to $G$ when there is no danger of confusion. In addition, we usually write $[k]:=\{1,\ldots,k\}$ for any integer $k\ge2$.

\section{Feasible sets and related lemmas}\label{Feasible}
Let $G$ be a graph, and let $S=\{x,y,z\}$ be a subset of $V(G)$ such that $d(x)\leq d(y)\leq d(z)$. We follow the definition of a feasible set in \cite{SHZ}.  Call $S$ \emph{balanceable} if one of the following conditions holds:
\begin{itemize}
\item[(C1)] $G[S]$ is an independent set;
\item[(C2)] $G[S]$ is a clique;
\item[(C3)] $G[S]$ contains only the edge $xy$;
\item[(C4)] $G[S]$ contains only the edges $xy$ and $xz$.
\end{itemize}
Furthermore, call $S$ \emph{accessible} if one of the following conditions holds:
\begin{itemize}
\item[(C5)] $G[S]$ contains only the edge $xz$, and $N(y)\backslash N(z)\neq\emptyset$;
\item[(C6)] $G[S]$ contains only the edges $xy$ and $yz$, and $N(x)\backslash N[y]\neq\emptyset$;
\item[(C7)] $G[S]$ contains only the edge $yz$, and $N(x)\backslash N(y)\neq\emptyset$, $N(x)\backslash N(z)\neq\emptyset$;
\item[(C8)] $G[S]$ contains only the edges $xz$ and $yz$, and $N(x)\backslash N[z]\neq\emptyset$, $N(y)\backslash N[z]\neq\emptyset$.
\end{itemize}
Usually, we call $S$ \emph{feasible} for short if it is balanceable or accessible.

We first present three lemmas given by Sun, Hou and Zeng \cite{SHZ}, which are useful in our proof. For any set $S=\{x,y,z\}$ with $d(x)\leq d(y)\leq d(z)$, let $p(S):=d(z)-d(y)$ and $q(S):=d(y)-d(x)$.

\begin{lem} [Sun, Hou and Zeng \cite{SHZ}]\label{pqmaxpq}
If $G$ contains a feasible set $S$, then one can delete at most
$$p(S)+q(S)+\max\{p(S),q(S)\}$$
vertices from $G$ such that the subgraph obtained has at least three vertices with the same degree.
\end{lem}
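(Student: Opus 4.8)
The plan is to prove the lemma by exhibiting an explicit set $D\subseteq V(G)\setminus S$ of at most $p(S)+q(S)+\max\{p(S),q(S)\}$ vertices whose deletion equalizes the degrees of $x$, $y$ and $z$. Since deleting a vertex $v\notin S$ lowers the degree of $w\in S$ by one exactly when $v\in N(w)$ and leaves the edges inside $S$ untouched, the degree of $w$ in $G-D$ is $d(w)-|N(w)\cap D|$. Writing $p=p(S)$ and $q=q(S)$, the goal is to make these three values equal, most naturally at the common value $d(x)$: this requires $z$ to lose $p+q$ neighbours, $y$ to lose $q$ neighbours, and $x$ to lose none. I would therefore choose $D$ disjoint from $N(x)\cup S$, split it into vertices of $N(y)\cap N(z)$ (each lowering both $d(y)$ and $d(z)$) and vertices of $N(z)\setminus N(y)$ (lowering only $d(z)$), and take $q$ of the former and $p$ of the latter, for a total of $p+q$ deletions in the ideal case.

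First I would record the elementary counting bounds that make the relevant neighbourhood regions large enough. Working inside $V(G)\setminus S$, the inequality $|N(z)\setminus N(x)|\ge d(z)-d(x)=p+q$, together with its analogue for $y$ and a lower-order correction for the at most two vertices of $S$ that may lie in these neighbourhoods, shows that the ``private'' and ``common'' neighbour sets of $y$ and $z$ outside $N(x)$ have total size essentially $p+q$. The role of the definition of a feasible set is precisely to guarantee that these sets are distributed well enough to realise the required reductions without collateral damage: in the balanceable cases (C1)--(C4) the edge pattern inside $S$ forces the needed containments automatically (for instance, when $G[S]$ is independent all three neighbourhoods avoid $S$, and when the only edges present are $xy$, or $xy$ and $xz$, the largest-degree vertex $z$ has all its neighbours free to be deleted), while in the accessible cases (C5)--(C8) the extra hypotheses such as $N(y)\setminus N(z)\neq\emptyset$ or $N(x)\setminus N[z]\neq\emptyset$ supply exactly the private neighbour of the appropriate vertex needed to adjust one degree without disturbing another.

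The main obstacle, and the source of the term $\max\{p,q\}$, is that the ideal perfectly-overlapping choice need not exist: if the common neighbours of $y$ and $z$ outside $N(x)$ are too scarce, then reducing $z$ by $p+q$ is forced to over-reduce $y$ below $d(x)$, whereas if the private neighbours are too scarce the reduction of $y$ by $q$ cannot be decoupled from that of $z$. I would handle this by allowing the common target to drop below $d(x)$: when lowering $z$ toward $y$ unavoidably drags $y$ down by an extra amount, I compensate by also lowering $x$ to the new common value, and then bound the total number of deletions. A careful accounting shows that at most one of the two gaps $p$ and $q$ can incur this doubling of cost, so the worst case needs $p+2q$ deletions when $q\ge p$ and $2p+q$ deletions when $p\ge q$; in either case this equals $p+q+\max\{p,q\}$, the claimed bound.

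Finally I would organize the write-up as a short case analysis over the eight configurations (C1)--(C8), verifying in each that the deletion set above can be selected within budget; the balanceable cases are uniform and quick, and the accessible cases each reduce to invoking the corresponding nonemptiness hypothesis once to break the overlap obstruction. A minor point to dispatch separately is that the chosen common target must be a nonnegative achievable degree, that is, $d(x)$ (or the lowered target) must be at least the number of neighbours each of $x,y,z$ has inside $S$; this is automatic except possibly for graphs of very small degree, which can be checked directly.
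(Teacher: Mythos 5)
This lemma is quoted in the paper from \cite{SHZ} without proof, so there is no in-paper argument to compare against; judged on its own merits, your outline follows the natural deletion-counting strategy, but it has genuine gaps exactly where the work lies. First, your fallback mechanism is unsound as described. When the common neighbours of $y$ and $z$ outside $N(x)$ run out and you ``compensate by also lowering $x$ to the new common value,'' the compensating vertices must be adjacent to $x$ but to neither $y$ nor $z$ (otherwise they disturb the degrees already fixed), and nothing guarantees such vertices exist: $N(x)$ may be entirely contained in $N(y)\cup N(z)$. The workable repair is to delete vertices of $N(x)\cap N(z)\setminus N(y)$, which lower $x$ and $z$ \emph{together}, and their existence rests on the bound $|(N(z)\setminus N(y))\setminus S|\ge p$. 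Likewise your crux claim that ``at most one of the two gaps can incur the doubling'' is precisely the assertion that one cannot simultaneously have fewer than $q$ common neighbours and fewer than $p$ private neighbours of $z$ outside $N(x)\cup S$; this follows from $|(N(z)\setminus N(x))\setminus S|\ge p+q$, since those two sets partition $(N(z)\setminus N(x))\setminus S$. You invoke ``a careful accounting'' but never do it, and these three inequalities, with their $\pm1$ corrections for vertices of $S$ trapped inside the relevant neighbourhoods, are the entire content of the case analysis you defer.

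Second, your description of how the accessibility hypotheses enter is wrong in mechanism, and following it literally would make cases (C5)--(C8) fail. In (C5), for instance, the guaranteed vertex of $N(y)\setminus N(z)$ is never deleted: deleting it lowers $d(y)$ without lowering $d(z)$, i.e.\ it widens the very gap being closed. Its actual role is as a counting certificate: it shows $N(y)\not\subseteq N(z)$, hence $|N(z)\setminus N(y)|\ge p+1$, which restores the bound $|(N(z)\setminus N(y))\setminus S|\ge p$ that would otherwise be lost because the undeletable vertex $x$ lies in $N(z)\setminus N(y)$. The same pattern (the nonemptiness hypothesis offsets a vertex of $S$ sitting inside a region you need to delete from) is what is required in (C6)--(C8). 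So the skeleton of your plan is right --- aim for $p+q$ deletions, pay at most $\max\{p,q\}$ extra in one of two mutually exclusive shortage scenarios --- but the existence statements that make it run, which is where the definition of a feasible set actually does its work, are missing or misattributed.
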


\begin{lem} [Sun, Hou and Zeng \cite{SHZ}]\label{P4}
Let $X=\{v_1, v_2, v_3, v_4\}$ be a subset of $G$ such that $d(v_1)\leq d(v_2)\leq d(v_3)\leq d(v_4)$. If $X$ doesn't contain balanceable sets, then $G[X]$ is an induced path such that $v_1$ and $v_2$ are terminal vertices.%, and $d(v_2)<d(v_3)$.
\end{lem}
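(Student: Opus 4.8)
The plan is to read off, from the hypothesis that none of the four $3$-element subsets of $X$ is balanceable, exactly which of the six possible edges of $G[X]$ are present. The working dictionary is the following classification of a triple $\{x,y,z\}$ with $d(x)\le d(y)\le d(z)$: by (C1)--(C4) it is balanceable precisely when $G[\{x,y,z\}]$ has $0$ edges, has $3$ edges, is a single edge joining the two lowest-degree vertices, or is a path whose centre is the lowest-degree vertex. Equivalently, a triple is \emph{not} balanceable only when it induces a single edge incident to its maximum-degree vertex, or a path whose centre is not its minimum-degree vertex. I will apply this to each of $\{v_1,v_2,v_3\}$, $\{v_1,v_2,v_4\}$, $\{v_1,v_3,v_4\}$, $\{v_2,v_3,v_4\}$, always reading off the roles $x,y,z$ from the global order $d(v_1)\le d(v_2)\le d(v_3)\le d(v_4)$.

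First I would establish the two ``boundary'' facts $v_1v_2\notin E(G)$ and $v_3v_4\in E(G)$. For the first: assuming $v_1v_2\in E$, the only non-balanceable induced subgraph on $\{v_1,v_2,v_3\}$ containing the edge $v_1v_2$ is the path $v_1v_2v_3$, so $v_2v_3\in E$; the same argument on $\{v_1,v_2,v_4\}$ gives $v_2v_4\in E$. But then in $\{v_2,v_3,v_4\}$ the vertex $v_2$, of minimum degree there, is joined to both $v_3$ and $v_4$, so that triple is balanceable by (C2) or (C4) --- a contradiction. Symmetrically, assuming $v_3v_4\notin E$, non-balanceability of $\{v_1,v_3,v_4\}$ and of $\{v_2,v_3,v_4\}$ forces $v_1v_3,v_2v_3\notin E$; since also $v_1v_2\notin E$, the triple $\{v_1,v_2,v_3\}$ is edgeless and hence balanceable by (C1) --- again a contradiction.

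With $v_1v_2\notin E$ and $v_3v_4\in E$ in hand, I would finish by a short counting argument. Write $a,b,c,d$ for the indicators of the edges $v_1v_3,v_1v_4,v_2v_3,v_2v_4$. Non-balanceability of $\{v_1,v_2,v_3\}$ and of $\{v_1,v_2,v_4\}$ (each having the non-edge $v_1v_2$, hence forbidden to be edgeless) gives $a+c\ge1$ and $b+d\ge1$; non-balanceability of $\{v_1,v_3,v_4\}$ and of $\{v_2,v_3,v_4\}$ (each containing the edge $v_3v_4$, hence forbidden to be a triangle) gives $a+b\le1$ and $c+d\le1$. Adding these shows $a+b+c+d=2$ with every inequality tight, which forces $(a,b,c,d)\in\{(1,0,0,1),(0,1,1,0)\}$. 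These are exactly the two paths $v_1v_3v_4v_2$ and $v_1v_4v_3v_2$, so $G[X]$ is an induced $P_4$ with $v_1$ and $v_2$ as its terminal vertices, as claimed.

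The one real subtlety --- and the step I would be most careful about --- is the role of equal degrees. The classification above is cleanest when the degrees are distinct; when there are ties a triple admits several admissible orderings, and it should be deemed balanceable if \emph{any} one of them satisfies (C1)--(C4). Fortunately this only makes balanceability easier to achieve, so the hypothesis ``no balanceable triple'' becomes stronger and the contradictions in the boundary step arise at least as readily. Moreover the four inequalities of the counting step invoke only (C1) (edgeless) and (C2) (triangle), which are insensitive to the ordering. Hence the whole argument is robust to ties, and I would dispatch them with a brief remark rather than a separate case analysis.
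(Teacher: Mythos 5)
Your proof is correct and complete. One point of context: the paper itself does not prove this lemma---it is imported from \cite{SHZ} as a black box---so there is no in-paper argument to compare against; I can only assess your proposal on its own terms, and it stands. Your dictionary of non-balanceable triples (a single edge incident to the maximum-degree vertex, or a two-edge path whose centre is not the minimum-degree vertex) is exactly the complement of (C1)--(C4), and both boundary deductions are sound: if $v_1v_2\in E(G)$, the triples $\{v_1,v_2,v_3\}$ and $\{v_1,v_2,v_4\}$ force $v_2v_3,v_2v_4\in E(G)$, making $\{v_2,v_3,v_4\}$ balanceable by (C2) or (C4); if $v_3v_4\notin E(G)$, the triples $\{v_1,v_3,v_4\}$ and $\{v_2,v_3,v_4\}$ can only be single edges $v_1v_4$ and $v_2v_4$, making $\{v_1,v_2,v_3\}$ edgeless. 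The closing count---$a+c\ge1$ and $b+d\ge1$ from excluding (C1), $a+b\le1$ and $c+d\le1$ from excluding (C2), so that summing forces every inequality to be tight and leaves only $(1,0,0,1)$ and $(0,1,1,0)$---is a tidy way to land on the two paths $v_1v_3v_4v_2$ and $v_1v_4v_3v_2$ without a brute-force check of all edge configurations. Your handling of ties is also the right disposal: a triple with repeated degrees should be deemed balanceable if \emph{some} admissible ordering satisfies (C1)--(C4), so non-balanceability in particular rules out the ordering inherited from $d(v_1)\le d(v_2)\le d(v_3)\le d(v_4)$, and that inherited ordering is the only one your deductions ever invoke.
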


\begin{lem} [Sun, Hou and Zeng \cite{SHZ}]\label{u3}
Let $G$ be a graph and $U=\{u_1,u_2,u_3,u_4,u_5\}\subseteq V(G)$. If $d(u_1)\leq \ldots\leq d(u_5)$, then there is a feasible set $S\subseteq U$ such that $u_3\in S$.
\end{lem}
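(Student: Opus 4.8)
The plan is to argue by contradiction: suppose that every $3$-subset of $U$ containing $u_3$ fails to be feasible. In particular none of these six triples is balanceable, so by the edge-count classification of $3$-sets (every triple realises exactly one of (C1)--(C8)) each of them realises one of the accessible patterns (C5)--(C8) -- a single edge incident to the top vertex, or two edges centred at the middle or the top vertex -- while violating the corresponding accessibility requirement. Violating (C5)--(C8) means precisely that a prescribed neighbourhood containment holds, e.g.\ $N(y)\subseteq N(z)$ for (C5), $N(x)\subseteq N[y]$ for (C6), and similarly for (C7) and (C8). Thus the contradiction hypothesis translates into a collection of nested-neighbourhood relations among $u_1,\dots,u_5$, to be played against the degree order $d(u_1)\le\cdots\le d(u_5)$.

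First I would tame the global adjacency structure using Lemma \ref{P4}. Consider the four $4$-subsets of $U$ containing $u_3$, namely $A=\{u_1,u_2,u_3,u_4\}$, $B=\{u_1,u_2,u_3,u_5\}$, $C=\{u_1,u_3,u_4,u_5\}$ and $D=\{u_2,u_3,u_4,u_5\}$. In each of these the unique $3$-subset avoiding $u_3$ lies in $W:=\{u_1,u_2,u_4,u_5\}$; since all $u_3$-triples are non-balanceable, Lemma \ref{P4} forces each of $A,B,C,D$ either to be an induced path (with its two lowest-degree vertices as the terminal vertices) or to contain a balanceable triple inside $W$.

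Next I would eliminate the ``all paths'' scenario, which is the clean part of the argument. If $A$ were an induced path with endpoints $u_1,u_2$, then $u_3$ is an internal vertex, so $u_3\sim u_4$ and $u_3$ is adjacent to exactly one of $u_1,u_2$. On the other hand, if $C$ and $D$ are induced paths then $u_3$ is an endpoint of each, giving $u_1\not\sim u_3$ and $u_2\not\sim u_3$; this contradicts $u_3$ being adjacent to one of $u_1,u_2$. Hence $A$, $C$, $D$ cannot all be paths, and therefore at least one triple of $W$ is balanceable. This is the pivot: it converts the problem into a small number of cases indexed by which triple of $W$ is balanceable together with the path structure (if any) of the remaining $4$-sets.

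The main obstacle is the final bookkeeping. A balanceable triple inside $W$ does not contain $u_3$, so it is useless on its own; its value is that it pins down several adjacencies among $u_1,u_2,u_4,u_5$, which in turn constrain the configurations (C5)--(C8) of the six $u_3$-triples. I would close each case by confronting these forced adjacencies and the degree order with the nested-neighbourhood relations coming from the accessibility failures. The recurring mechanism is that a failure such as $N(u_i)\subseteq N(u_j)$ asserts that $u_i$ has no private neighbour outside $N(u_j)$; I would either exhibit such a private neighbour among $u_1,\dots,u_5$ (produced by a path edge or by the balanceable triple), directly certifying one accessibility condition, or else chain two containments to violate a strict degree inequality forced by $d(u_1)\le\cdots\le d(u_5)$. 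Handling this interplay uniformly across the cases -- rather than the individual contradictions, each of which is short -- is where the real work lies.
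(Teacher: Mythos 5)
The paper states Lemma \ref{u3} without proof, quoting it from \cite{SHZ}, so there is no in-paper argument to compare against; judged on its own terms, your proposal has a genuine gap: it is a plan, not a proof. Your setup is sound -- negating (C5)--(C8) does convert non-feasibility of each $u_3$-triple into neighbourhood containments (though note that for (C7) and (C8) the negation is a \emph{disjunction} of two containments, which multiplies cases), and your pivot is correct under strict degree inequalities: if $A=\{u_1,u_2,u_3,u_4\}$, $C=\{u_1,u_3,u_4,u_5\}$ and $D=\{u_2,u_3,u_4,u_5\}$ were all induced paths with the endpoints prescribed by Lemma \ref{P4}, then $A$ forces $u_3$ to be adjacent to exactly one of $u_1,u_2$ while $C$ and $D$ forbid both. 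But the argument stops exactly where the lemma lives. Knowing that some triple $T\subseteq W=\{u_1,u_2,u_4,u_5\}$ is balanceable does not, as you assert, ``pin down several adjacencies'': it only restricts $G[T]$ to one of the four shapes (C1)--(C4), each of which branches further against the six disjunctive containment failures. Your closing paragraph -- ``I would close each case by confronting these forced adjacencies\dots where the real work lies'' -- is an acknowledgement that the decisive case analysis has not been carried out, and nothing in the sketch certifies that the proposed mechanism (exhibit a private neighbour inside $U$, or chain two containments against the degree order) actually closes every branch; that verification is the substance of the lemma.

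There is also a concrete soundness issue in the pivot itself: Lemma \ref{P4} identifies the terminal vertices only through a chosen sorted labelling, which is ambiguous when degrees tie. If, say, $d(u_3)=d(u_4)$, then applying Lemma \ref{P4} to $C$ may legitimately return an induced path with endpoints $u_1,u_4$, so the inference $u_1\not\sim u_3$ fails and the three-path contradiction evaporates. Since the lemma as stated permits arbitrary ties among $d(u_1)\le\cdots\le d(u_5)$, these degenerate orderings must be treated explicitly -- for instance by observing that with a valid relabelling a single edge between two equal-degree vertices realises (C3), and a two-edge path centred at a vertex of minimum degree in the triple realises (C4), so ties tend to create balanceable triples outright -- or by a separate argument. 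In short: your structural reduction is reasonable and broadly in the spirit of the feasible-set machinery of \cite{SHZ}, but what you have written proves only the easy preliminary step, and the case-by-case contradiction that constitutes the actual proof of Lemma \ref{u3} is missing.
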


The following lemma plays a key role in our proof of Theorem \ref{Delete3}.
\begin{lem}\label{Main-Lem}
Let $G$ be a graph and let $U=\{v_1,v_2,v_3,v_4\}\subseteq V(G)$ with $d(v_1)=d(v_2)=d$ and $d(v_3)=d(v_4)=d+2$ for any integer $d\ge1$. If $U$ contains a balanceable set, then one can delete at most $3$ vertices from $G$ such that the subgraph obtained has at least three vertices with the same degree.
\end{lem}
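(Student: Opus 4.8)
The plan is to exploit the very special degree profile of $U$ rather than to invoke Lemma~\ref{pqmaxpq} directly on the balanceable set $S$: whichever three of the four vertices make up $S$, its degree multiset is $\{d,d,d+2\}$ or $\{d,d+2,d+2\}$, so $\{p(S),q(S)\}=\{0,2\}$ and the bound $p(S)+q(S)+\max\{p(S),q(S)\}$ is exactly $4$. Hence the whole content of the lemma is to save a single deletion, and this saving must come from the facts that the gap is exactly $2$ and that $U$ contains \emph{two} vertices of each degree. First I would fix notation $A=\{a_1,a_2\}$ for the two vertices of degree $d$ and $B=\{b_1,b_2\}$ for the two of degree $d+2$, and take as target the creation of three vertices of degree exactly $d$. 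Since $a_1,a_2$ already have degree $d$, it suffices to pull one $b_i$ down to $d$ while protecting both vertices of $A$, or to pull both of $B$ down to $d$ while protecting just one vertex of $A$.

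The engine of the argument is deletion of neighbours of the high-degree vertices. Deleting a vertex of $N(b_i)$ that avoids $A\cup N(A)$ lowers $d(b_i)$ by one and leaves $d(a_1),d(a_2)$ unchanged, while deleting a \emph{common} neighbour of $b_1$ and $b_2$ lowers both at once. The cheap outcomes are then: (i) some $b_i$ has two neighbours outside $A\cup N(A)$, so two deletions give the triple $\{a_1,a_2,b_i\}$ of degree $d$; or (ii) $b_1$ and $b_2$ have two common neighbours outside $N[a_1]$, so two deletions send both of $B$ to $d$ and leave the protected $a_1$ at $d$, yielding the triple $\{a_1,b_1,b_2\}$. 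The point of protecting only one vertex of $A$ is quantitative: each $b_i$ has $d+2$ neighbours while $|N[a_1]|\le d+1$, so $b_i$ always has a neighbour outside $N[a_1]$, whereas outside $A\cup N(A)$ it might have none. A budget of $3$ is precisely what lets me combine one shared deletion with one private deletion at each of $b_1,b_2$ (three in total, each $b_i$ losing two) when fewer common neighbours are available.

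I would then organise the remaining work by the balanceable type (C1)--(C4) of $S$, since each type determines exactly which edges occur inside $\{a_1,a_2,b_1,b_2\}$. The relevant dividing line is whether $S$ contains an edge joining a degree-$d$ vertex to a degree-$(d+2)$ vertex. If $b_i$ is non-adjacent to a chosen $a_j$, then $N(b_i)\cap N[a_j]\subseteq N(a_j)$ has at most $d$ elements, so $b_i$ keeps at least two neighbours off $N[a_j]$ and can be pulled down to $d$ while $a_j$ is protected; only such a low--high edge can spoil this count. In the independent type (C1) no such edge exists, and in the single-edge and two-edge types (C3) and (C4) the forced low--high edge, when present, involves only one specific pair, so the other degree-$d$ vertex remains a safe choice to protect.

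The hard part will be the clique type (C2) and the entangled sub-cases of (C4), where a degree-$(d+2)$ vertex is adjacent to both degree-$d$ vertices and may share almost all of their neighbourhoods, so that protecting either $a_j$ leaves too few deletable neighbours and neither cheap outcome survives. Here my plan is to abandon the target $d$: I would instead delete common neighbours of an entangled pair $a_j,b_i$, lowering one low and one high vertex in lockstep, and spend the remaining deletions on private neighbours so that three of the four degrees meet at a common value, the fact that the high--low gap is exactly $2$ being what guarantees this can be finished inside three deletions. Verifying that the private-neighbour counts, the common-neighbour counts, and the two possible choices of protected vertex cannot all fail simultaneously within a budget of $3$ is the genuine combinatorial core, and is where I expect the bulk of the case checking to go.
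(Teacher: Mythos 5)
Your proposal is a strategy outline, not a proof: the configurations you yourself defer as ``the genuine combinatorial core'' --- where a degree-$(d+2)$ vertex is adjacent to both degree-$d$ vertices and shares most of their neighbourhoods --- are exactly where the lemma's difficulty lives, and your plan for them does not close. Deleting a common neighbour of an entangled pair $a_j,b_i$ lowers both degrees by one, so it preserves their gap of $2$; the gap must still be closed by two private deletions, and nothing in the sketch shows that after those three deletions a \emph{third} vertex lands on the common value. Your two cheap outcomes also leave a hole in coverage: outcome (i) needs $b_i$ to have \emph{two} neighbours outside $A\cup N(A)=N[a_1]\cup N[a_2]$, so a configuration in which $b_i$ has exactly one such neighbour is handled by neither outcome, and your non-adjacency count ($|N(b_i)\setminus N[a_j]|\ge 2$ when $b_ia_j\notin E(G)$) only equates the two degrees $d(a_j)$ and $d(b_i)$, not three. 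Two further slips: the balanceable type of $S$ constrains only the edges inside the three-vertex set $S$, not inside all of $U$, so organising by (C1)--(C4) does not ``determine exactly which edges occur inside $\{a_1,a_2,b_1,b_2\}$''; and since you forgo the complementation reduction (the paper's first move), every case must be run for both degree patterns $\{d,d,d+2\}$ and $\{d,d+2,d+2\}$ of $S$.

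What is missing is precisely the mechanism the paper builds, and your decision not to invoke Lemma \ref{pqmaxpq} is what makes it unreachable. The paper does not demand two deletable neighbours at once: if $N(v_3)\setminus(N[v_1]\cup N[v_2])$ is nonempty it deletes a \emph{single} vertex there, observes that $\{v_1,v_2,v_3\}$ is still balanceable with degrees $(d,d,d+1)$, and finishes with two more deletions by Lemma \ref{pqmaxpq}; this both disposes of the easy case and establishes the containment $N(v_3)\subseteq N[v_1]\cup N[v_2]$. A parallel argument (after showing $\{v_1,v_2,v_4\}$ is feasible) gives $N(v_4)\subseteq N[v_1]\cup N[v_2]$. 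With both containments in hand, the paper counts over the symmetric difference $W=\big((N(v_1)\setminus N(v_2))\cup(N(v_2)\setminus N(v_1))\big)\setminus U$ and proves $e(\{v_3,v_4\},W)-|W|\ge 4$, which produces two vertices lying in $N(v_j)\setminus(N(v_{3-j})\cup U)$ for a single $j\in[2]$ and adjacent to both $v_3$ and $v_4$ --- exactly the two common neighbours your outcome (ii) needs, with $v_{3-j}$ as the protected low vertex. Without the containments this count is unavailable, and without Lemma \ref{pqmaxpq} (or a hand-made substitute for the $(d,d,d+1)$ configuration) the containments themselves are out of reach; so the entangled cases you postponed cannot be discharged by the tools you allowed yourself, and the proposal as it stands has a genuine gap.
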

\begin{proof}[\bf {Proof}]
Suppose that $S\subseteq U$ is a balanceable set. Clearly, the degrees of the three vertices in $S$ are either $d, d, d+2$ or $d, d+2, d+2$. We may assume that the three vertices in $S$ has degrees $d, d, d+2$, respectively; otherwise, we may take the complement of $G$ as our graph. Without loss of generality, let $S=\{v_1, v_2, v_3\}$.
\begin{claim}\label{312}
The set $N(v_3)\backslash (N[v_1] \cup N[v_2])$ is empty.
\end{claim}
\begin{proof}
Suppose that there exists a vertex $v\in N(v_3)\backslash (N[v_1] \cup N[v_2])$. Clearly, the deleting of $v$ reduces the degrees of $v_1, v_2, v_3$ from $(d, d, d+2)$ to $(d, d, d+1)$. Note that $\{v_1,v_2,v_3\}$ remains balanceable in the resulting graph as it is balanceable in $G$. It follows from Lemma \ref{pqmaxpq} that we can delete at most two more vertices to equate the degrees of $v_1, v_2, v_3$. Overall, we delete at most 3 vertices, as desired.
\end{proof}
\begin{claim}\label{124}
The set $\{v_1, v_2, v_4\}$ is feasible.
\end{claim}
\begin{proof}
Suppose that $\{v_1, v_2, v_4\}$ is not a feasible set. It follows from the conditions (C5)-(C8) that at least one of $N(v_1)\setminus N[v_2]=\emptyset$, $N(v_1)\setminus N[v_4]=\emptyset$ and $N(v_2)\setminus N[v_4]=\emptyset$ holds.

If $N(v_1)\setminus N[v_2]=\emptyset$, then $|N[v_1] \cup N[v_2]|\le d+2$ as $d(v_1)=d(v_2)=d$. Note that $d(v_3)=d+2$ and $N(v_3)\setminus(N[v_1]\cup N[v_2])=\emptyset$ by Claim \ref{312}. This forces that $N[v_1]\cup N[v_2]=N(v_3)$, meaning that $v_3\in N[v_1]\cup N[v_2]$. This leads to a contradiction as $v_3\notin N(v_3)$.

If $N(v_1)\setminus N[v_4]=\emptyset$, then we first assert that
\begin{align}\label{v12}
v_1v_2\notin E(G).
\end{align}
Suppose that $v_1v_2\in E(G)$. It follows from the conditions (C2)-(C4) that $v_1v_4\notin E(G)$ and $v_2v_4\in E(G)$ as $\{v_1, v_2, v_4\}$ is not feasible. This further implies that $N(v_1)\setminus N[v_2]=\emptyset$ by the condition (C6). It follows that $N(v_3)\subseteq N[v_2]$ by Claim \ref{312}. This leads to a contradiction as $d(v_3)=d(v_2)+2$. Thus, we know that \eqref{v12} holds. Recall that $\{v_1, v_2, v_3\}$ is a balanceable set. This together with \eqref{v12} yields that $\{v_1, v_2, v_3\}$ is actually an independent set, i.e., $v_i\notin N(v_3)$ for each $i\in[2]$. Note that $N(v_3)\setminus(N[v_1]\cup N[v_2])=\emptyset$ by Claim \ref{312} and $d(v_1)=d(v_2)=d(v_3)-2$. Thus, for each $i\in[2]$
\begin{align}\label{v3i}
|N(v_3)\setminus N[v_i]|=|N(v_3)\setminus N(v_i)|\ge2 \;\;\text{and}\;\; N(v_3)\setminus N(v_i)\subseteq N(v_{3-i})\setminus N(v_i).
\end{align}
If $v_2v_4\in E(G)$, then it follows from \eqref{v3i} that $N(v_3)\setminus N(v_2)\subseteq N(v_1)\setminus N(v_2)\subseteq N(v_4)$ as $N(v_1)\setminus N[v_4]=\emptyset$ and $v_4\in N(v_2)$. If $v_2v_4\notin E(G)$, then it follows from the condition (C5) that $N(v_2)\setminus N[v_4]=\emptyset$ as $\{v_1, v_2, v_4\}$ is not feasible. This together with \eqref{v3i} further implies that $N(v_3)\setminus N(v_1)\subseteq N(v_2)\setminus N(v_1)\subseteq N(v_4)$ as $v_4\notin N(v_2)$. Thus, we conclude that there exists some $j\in[2]$ such that
\[
N(v_3)\setminus N(v_j)\subseteq N(v_4).
\]
Recall that $|N(v_3)\setminus N(v_i)|\ge2$ for each $i\in[2]$ by \eqref{v3i}. Consequently, we can always delete two vertices in $N(v_3)\setminus N(v_j)$ to equate the degrees of $v_{j}$, $v_3$ and $v_4$ from $(d, d+2, d+2)$ to $(d, d, d)$ for some $j\in[2]$, as required.

If $N(v_2)\setminus N[v_4]=\emptyset$, then an argument similar to that used in the case of $N(v_1)\setminus N[v_4]=\emptyset$ implies the desired result. Thus, we complete the proof of Claim \ref{124}.
\end{proof}

By Claim \ref{124}, we know that $\{v_1, v_2, v_4\}$ is a feasible set. An argument similar to that used in Claim \ref{312} implies that
\begin{align}\label{412}
N(v_4)\setminus (N[v_1] \cup N[v_2])=\emptyset,
\end{align}
where the only different is that $\{v_1,v_2,v_4\}$ remains feasible in the resulting graph if we delete some vertex in $N(v_4)\backslash (N[v_1] \cup N[v_2])$ by checking the conditions (C1)-(C8) one by one. This together with Claim \ref{312} yields that $(N(v_3)\cup N(v_4))\subseteq(N[v_1] \cup N[v_2])$. Recall that $U=\{v_1,v_2,v_3,v_4\}$. Let
\[
W:=\big((N(v_1)\setminus N(v_2))\cup (N(v_2)\setminus N(v_1))\big)\setminus U.
\] In what follows, we show that
\begin{align}\label{W4}
e\left(\{v_3,v_4\},W\right)-|W|\ge4,
\end{align}
meaning that there exists two vertices $u_1$ and $u_2$ in $N(v_j)\setminus (N(v_{3-j})\cup U)$ for some $j\in[2]$ such that $u_iv_3\in E(G)$ and $u_iv_4\in E(G)$ for each $i\in[2]$. It follows that we can always equate the degrees of $v_{3-j}$, $v_3$ and $v_4$ from $(d, d+2, d+2)$ to $(d, d, d)$ by deleting $u_1$ and $u_2$ for some $j\in[2]$, as required. Now, we prove \eqref{W4}. For two vertices $u,v\in V(G)$, let $\textbf{1}_{uv}=1$ if $uv\in E(G)$ and $\textbf{1}_{uv}=0$ if $uv\notin E(G)$.
%Let $W:=((N(v_1)\setminus N(v_2))\cup (N(v_2)\setminus N(v_1)))\setminus\{v_3,v_4\}$ and $\ell:=|N(v_1)\cap N(v_2)|$.
Note that
\[
|W|=d(v_1)+d(v_2)-2|\left(N(v_1)\cap N(v_2)\right)\setminus U|-2\cdot\textbf{1}_{v_1v_2}-e\left(\{v_1,v_2\},\{v_3,v_4\}\right)
\]
and
\begin{align*}
e(\{v_3,v_4\},W)\ge d(v_3)+d(v_4)-2\cdot\textbf{1}_{v_3v_4}-e\left(\{v_3,v_4\},\{v_1,v_2\}\right)-2|\left(N(v_1)\cap N(v_2)\right)\setminus U|.
\end{align*}
Thus, we have
\[e
(\{v_3,v_4\},W)-|W|\ge d(v_3)+d(v_4)-d(v_1)-d(v_2)+2(\textbf{1}_{v_1v_2}-\textbf{1}_{v_3v_4})=4+2(\textbf{1}_{v_1v_2}-\textbf{1}_{v_3v_4}).
\]
This implies \eqref{W4} unless $v_1v_2\notin E(G)$ and $v_3v_4\in E(G)$. However, $v_1v_2\notin E(G)$ means that $\{v_1, v_2, v_3\}$ is an independent set as $\{v_1, v_2, v_3\}$ is a balanceable set, i.e., $v_3\notin (N[v_1] \cup N[v_2])$. Note that $N(v_4)\subseteq(N[v_1] \cup N[v_2])$ by \eqref{412}. This leads to a contradiction as $v_3\in N(v_4)$. Thus, the inequality \eqref{W4} always holds. This finally completes the proof of Lemma \ref{Main-Lem}.
\end{proof}
\section{Proof of Theorem \ref{Delete3}}
In this section, we prove Theorem \ref{Delete3}.
\begin{proof} [{\bf Proof of Theorem \ref{Delete3}}]
For an integer $n\ge5$, let $G$ be an $n$-vertex graph with $\text{rep}(G)=2$. First, we may assume that $G$ contains no isolated vertices. Otherwise, it is easy to check that Theorem \ref{Delete3} holds for any $G$ with $n\le6$. It suffices to prove Theorem \ref{Delete3} for $n\ge7$. Thus, we may take the subgraph $G'$ of $G$ by deleting the isolated vertices as our new graph. Clearly, $G'$ has at least 5 vertices, repetition number exactly two and positive degrees.

Suppose that one must delete more than 3 vertices from $G$ to obtain an induced subgraph with at least three vertices with the same degree. Let
\[
S=\left\{d\in[n-1]: \text{there exist exactly two vertices in $G$ with degree $d$}\right\}
\]
and
\[
T=\left\{d\in[n-1]: \text{there exists no vertex in $G$ with degree $d$}\right\}.
\]
Now, we proceed our proof by showing the following series of claims. For some $x\notin[n-1]$, we use $x\in T$ (or $x\in S$) to denote that there exists $y\in T$  (or $y\in S$) such that $x\equiv y$ (mod $n-1$).

\begin{claim}\label{0}
If $d\in S$, then either $d-1\in T$ or $d+1\in T$.
\end{claim}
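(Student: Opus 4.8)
The plan is to argue by contradiction, keeping in mind the standing assumption that more than $3$ vertices must be deleted to create three equal degrees. Assume $d\in S$, so there are exactly two vertices $x,y$ with $d(x)=d(y)=d$, and suppose toward a contradiction that $d-1\notin T$ and $d+1\notin T$ both fail to hold, i.e.\ that there exist vertices $w$ and $u$ with $d(w)=d-1$ and $d(u)=d+1$. These four vertices are pairwise distinct, and I will derive a contradiction from the existence of $w$ and $u$, setting $U:=\{w,x,y,u\}$.

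First I would rule out every feasible triple inside $U$. Applying Lemma~\ref{pqmaxpq} to each of the four triples and computing $p(S)+q(S)+\max\{p(S),q(S)\}$: for $\{w,x,y\}$ (degrees $d-1,d,d$) and for $\{x,y,u\}$ (degrees $d,d,d+1$) this quantity equals $2$, while for the two mixed triples $\{w,x,u\}$ and $\{w,y,u\}$ (degrees $d-1,d,d+1$) it equals $1+1+1=3$. In every case the bound is at most $3$, so if any of these triples were feasible, Lemma~\ref{pqmaxpq} would equate three degrees after deleting at most $3$ vertices, contradicting the standing assumption. Hence none of the four triples is feasible, and in particular $U$ contains no balanceable set.

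Now I would invoke Lemma~\ref{P4}. Since $U$, whose sorted degrees are $d-1\le d\le d\le d+1$, contains no balanceable set, $G[U]$ is an induced path whose terminal vertices are $w$ and one of $x,y$, with $u$ and the remaining degree-$d$ vertex as its two internal vertices. There are exactly two possibilities, according to whether the vertex adjacent to $w$ is a degree-$d$ vertex or $u$. The final step is to check that each possibility contains a balanceable triple, contradicting the previous paragraph. If $w$ is adjacent to a degree-$d$ vertex, say $x$, the induced path visits the vertices in the order $w,x,u,y$, so $\{w,x,y\}$ induces exactly the single edge $wx$ and is balanceable by (C3). If instead $w$ is adjacent to $u$, the path visits $w,u,x,y$ for a suitable labeling, so $x$ is adjacent to both $y$ and $u$ while $yu\notin E$; taking $x$ in the role of the smallest vertex then shows that $\{x,y,u\}$ satisfies (C4) and is balanceable. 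Either way we reach a contradiction, which proves the claim.

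I expect the main subtlety to be the observation that the two mixed triples $\{w,x,u\}$ and $\{w,y,u\}$ also fall within the deletion budget of exactly $3$; this is what upgrades ``two controlled triples'' into ``no balanceable subset at all'' and thereby unlocks the rigid induced-$P_4$ structure supplied by Lemma~\ref{P4}. The remaining care is purely bookkeeping: when exhibiting the balanceable triples in the two path cases one must use the freedom to assign the roles of the smallest and middle vertex to the two equal-degree vertices $x,y$, so that the induced edges match the balanceable patterns (C3) or (C4) rather than the accessible patterns (C5)--(C8).
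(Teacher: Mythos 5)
Your argument for the range $2\le d\le n-2$ is correct and is essentially the paper's own proof: rule out balanceable subsets of $U$ via Lemma~\ref{pqmaxpq} (the paper only bothers to exclude balanceable triples, since that is all Lemma~\ref{P4} requires), invoke Lemma~\ref{P4} to get an induced path with $w$ and one degree-$d$ vertex as endpoints, and then split into the two possible path orders. The only cosmetic difference is the endgame: in each path configuration the paper deletes a single vertex directly (the internal vertex $u$ in the path $w$-$x$-$u$-$y$, and the endpoint $w$ in the path $w$-$u$-$x$-$y$) to produce three equal degrees, while you instead exhibit a balanceable triple satisfying (C3) or (C4) and contradict your first step; both versions are valid, and your handling of the tie between the two degree-$d$ vertices when matching the patterns (C3)/(C4) is sound.

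There is, however, a genuine gap: you silently assume that both $d-1$ and $d+1$ lie in $[n-1]$, i.e.\ that $2\le d\le n-2$. The claim must also cover $d=1$ and $d=n-1$, where it is interpreted through the convention stated just before the claims: for $x\notin[n-1]$, ``$x\in T$'' means $y\in T$ for the $y\in[n-1]$ with $x\equiv y \pmod{n-1}$. Thus for $d=1$ the assertion is ``$n-1\in T$ or $2\in T$'', and negating it hands you a vertex of degree $n-1$, \emph{not} a vertex of degree $0$; your set $U$ with degree pattern $(d-1,d,d,d+1)$ simply does not exist, so the whole path argument cannot be run. The paper treats this case separately: if $u$ has degree $n-1$ and $v$ has degree $2$, then $u$ is adjacent to everything, so each of the two degree-$1$ vertices has $u$ as its unique neighbor; deleting the other neighbor $w\in N(v)\setminus\{u\}$ of $v$ leaves $v$ and the two degree-$1$ vertices all with degree $1$, a contradiction. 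Symmetrically, for $d=n-1$ the assertion is ``$n-2\in T$ or $1\in T$'', and two vertices of degree $n-1$ force minimum degree at least $2$, so $1\in T$. These wrap-around cases are not decorative: the final counting argument ($|T|\ge|S|$ against $|T|=|S|-1$) applies the claim to every $d\in S$, including $d=1$ and $d=n-1$, so your proof as written does not suffice for the way the claim is used.
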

\begin{proof}
We first prove this for $2\le d\le n-2$. Suppose that there exist 4 vertices $v_1,v_2,v_3,v_4$ in $G$ with degrees $d-1$, $d$, $d$, $d+1$, respectively. If $X:=\{v_1,v_2,v_3,v_4\}$ contains a balanceable set $S_0$, then we can delete at most 3 vertices to equate the degrees of the vertices in $S_0$ by Lemma \ref{pqmaxpq}, a contradiction. Otherwise, $G[X]$ is an induced path with $v_1,v_2$ as its terminal vertices by Lemma \ref{P4}. Clearly, this path has two possible degree sequences $(d(v_1),d(v_3),d(v_4),d(v_2))=(d-1,d,d+1,d)$ and $(d(v_1),d(v_4),d(v_3),d(v_2))=(d-1,d+1,d,d)$. For the first case, the deleting of $v_4$ reduces the degrees of $v_1,v_2,v_3$ from $(d-1, d, d)$ to $(d-1, d-1, d-1)$, a contradiction; for the second case, the deleting of $v_1$ reduces the degrees of $v_2,v_3,v_4$ from $(d, d, d+1)$ to $(d, d, d)$, a contradiction.

If $d=1\in S$, then we show that $n-1\in T$ if $2\notin T$. Suppose that $n-1\notin T$. This means that there exist two vertices $u,v\in V(G)$ with $d(u)=n-1$ and $d(v)=2$. Let $w\in N(v)\setminus \{u\}$ and $d(v_i)=1$ for each $i\in[2]$. Clearly, $v,v_1,v_2$ have the same degree 1 in the resulting graph by deleting $w$, a contradiction. If $d=n-1\in S$, then every vertex in $G$ has degree at least 2, i.e., $d+1\in T$.
\end{proof}

\begin{claim}\label{1}
If $\{d, d+1\}\subseteq S$, then $\{d-1,d+2\}\subseteq T$.
\end{claim}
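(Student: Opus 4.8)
The plan is to derive Claim~\ref{1} as an immediate consequence of Claim~\ref{0}, exploiting the fact that the sets $S$ and $T$ are disjoint by their very definitions: a degree attained by exactly two vertices cannot simultaneously be attained by no vertex. Thus membership of a value in $S$ automatically certifies its non-membership in $T$, which is exactly the leverage needed to collapse each of the two-way alternatives that Claim~\ref{0} produces.

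Concretely, assume $\{d, d+1\}\subseteq S$. First I would apply Claim~\ref{0} to the degree $d\in S$, which yields that $d-1\in T$ or $d+1\in T$. Since $d+1\in S$ and $S\cap T=\emptyset$, the second alternative is impossible, forcing $d-1\in T$. Symmetrically, applying Claim~\ref{0} to $d+1\in S$ gives $d\in T$ or $d+2\in T$; because $d\in S$ rules out $d\in T$, I obtain $d+2\in T$. Combining the two conclusions gives $\{d-1,d+2\}\subseteq T$, as desired.

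There is no substantive obstacle here, since the entire content is already packaged in Claim~\ref{0}. The only point requiring care is that the statement ranges over all $d$, including the boundary indices where $d-1$ or $d+2$ leaves $[n-1]$. For these I rely on the stated convention that such indices are read modulo $n-1$, and I would note that Claim~\ref{0} has already been proved under precisely this convention (its proof separately treats $d=1$, giving $n-1\equiv d-1\in T$, and $d=n-1$, giving $1\equiv d+1\in T$). Hence both applications of Claim~\ref{0} remain valid at the boundary, and no additional case analysis is needed.
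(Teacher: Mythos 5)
Your proposal is correct and is essentially identical to the paper's own proof: both apply Claim~\ref{0} to each of $d$ and $d+1$ and use the disjointness of $S$ and $T$ (i.e., $d+1\in S$ forces $d+1\notin T$, and $d\in S$ forces $d\notin T$) to eliminate one alternative each time. Your extra remark about the boundary indices being read modulo $n-1$ is a harmless clarification consistent with the paper's stated convention.
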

\begin{proof}
Note that $d\in S$ and $d+1\notin T$. This implies that $d-1\in T$ by Claim \ref{0}. Similarly, we have $d+2\in T$ as $d+1\in S$ and $d\notin T$.
%Suppose that there exist 5 vertices $v_1,v_2,v_3,v_4,v_5$ in $G$ with degrees $d$, $d$, $d+1$, $d+1$, $d+2$, respectively. By Lemma \ref{u3}, $U:=\{v_1,v_2,v_3,v_4,v_5\}$ contain a feasible set $S$ such that $v_3\in S$. It follows from Lemma \ref{pqmaxpq} that we can always delete at most 3 vertices to equate the degrees of the vertices in $S$, a contradiction. Similarly, we can prove that $d-1\in T$.
\end{proof}

\begin{claim}\label{2}
If $\{d, d+2\}\subseteq S$, then $\{d-1,d+1,d+3\}\subseteq T$.
\end{claim}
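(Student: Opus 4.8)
The plan is to use the two pairs of equal-degree vertices supplied by $\{d,d+2\}\subseteq S$. Let $v_1,v_2$ be the two vertices of degree $d$ and $v_3,v_4$ the two vertices of degree $d+2$, and put $U=\{v_1,v_2,v_3,v_4\}$. Since we are assuming throughout that one cannot delete at most $3$ vertices to create three vertices of equal degree, Lemma~\ref{Main-Lem} forces $U$ to contain no balanceable set (here $d\ge 1$ as $G$ has positive degrees), and then Lemma~\ref{P4} shows that $G[U]$ is an induced path whose endpoints are the two degree-$d$ vertices $v_1,v_2$; say this path is $v_1v_3v_4v_2$. I record the adjacencies it encodes: $v_1v_2\notin E(G)$ while $v_3v_4\in E(G)$ (endpoints versus middle edge), together with $v_1v_3,v_2v_4\in E(G)$ and $v_1v_4,v_2v_3\notin E(G)$. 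The crucial point is that the degree-$d$ pair is nonadjacent while the degree-$(d+2)$ pair is adjacent; the three conclusions all flow from this.

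The conclusions $d-1\in T$ and $d+3\in T$ are then immediate. If some vertex $u$ had degree $d-1$, then because $v_1v_2\notin E(G)$ the triple $\{u,v_1,v_2\}$ (degrees $d-1,d,d$) satisfies one of (C1), (C3), (C4) no matter how $u$ attaches to $v_1$ and $v_2$, so it is balanceable; here $p=0$ and $q=1$, so Lemma~\ref{pqmaxpq} equalizes after deleting at most $2$ vertices, a contradiction. Symmetrically, if some vertex $t$ had degree $d+3$, then because $v_3v_4\in E(G)$ the triple $\{v_3,v_4,t\}$ (degrees $d+2,d+2,d+3$) satisfies one of (C2), (C3), (C4) and is again balanceable, with $p=1$ and $q=0$, giving the same contradiction. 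One may also note that passing to the complement sends degree $\delta$ to $n-1-\delta$ and swaps the two degree classes, so the ``$+3$'' conclusion for $\bar G$ is exactly the ``$-1$'' conclusion for $G$; this serves as a consistency check and will also dispose of the boundary cases.

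The delicate case, which I expect to be the main obstacle, is $d+1\in T$: a degree-$(d+1)$ vertex $w$ lies strictly between the two classes, and since $v_1v_2\notin E(G)$ the triple $\{v_1,v_2,w\}$ now falls into the accessible cases (C5) or (C8) rather than a balanceable one, so the single-triple argument no longer closes. Instead I would test $w$ against the two degree-$(d+2)$ vertices separately, forming $A=\{v_1,v_2,w,v_3\}$ and $A'=\{v_1,v_2,w,v_4\}$, each with degrees $d,d,d+1,d+2$. In either set the triple lying in $U$ is not balanceable, while each remaining triple, having degrees $d,d,d+1$ or $d,d+1,d+2$, would---if balanceable, hence feasible---let Lemma~\ref{pqmaxpq} equalize after at most $3$ deletions (spreads giving bounds $2$ and $3$ respectively); hence neither $A$ nor $A'$ contains a balanceable triple. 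By Lemma~\ref{P4}, each of $A,A'$ is therefore an induced path with endpoints $v_1,v_2$, and reading off the unique neighbor of each endpoint pins down $w$. In $A$, since $v_1v_3\in E(G)$ and $v_2v_3\notin E(G)$, the path must be $v_1v_3wv_2$, so $wv_2\in E(G)$ and $wv_1\notin E(G)$; in $A'$, since $v_2v_4\in E(G)$ and $v_1v_4\notin E(G)$, the path must be $v_1wv_4v_2$, so $wv_1\in E(G)$ and $wv_2\notin E(G)$. These are contradictory, so no vertex of degree $d+1$ exists.

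Combining the three parts yields $\{d-1,d+1,d+3\}\subseteq T$. The boundary situations, in which $d-1$, $d+1$, or $d+3$ leaves $[n-1]$, are interpreted through the stated ``mod $n-1$'' convention and reduce to the interior argument above after passing to the complement, exactly as the endpoint degrees $1$ and $n-1$ are handled in the proof of Claim~\ref{0}.
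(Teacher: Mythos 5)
Your treatment of the generic range $2\le d\le n-4$ is correct, and your proof that $d+1\in T$ is a genuinely different (and valid) route from the paper's: where the paper invokes Lemma~\ref{u3} on the five vertices $U\cup\{w\}$ before any structure of $G[U]$ is known, you first fix the induced path $v_1v_3v_4v_2$ and then apply Lemma~\ref{P4} twice, to $\{v_1,v_2,w,v_3\}$ and $\{v_1,v_2,w,v_4\}$, extracting contradictory adjacencies of $w$ to $v_1$; the degree arithmetic ($p+q+\max\{p,q\}\le 3$ for every triple involved) checks out, and likewise your handling of $d-1$ and $d+3$ matches the paper's.

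The gap is your final paragraph. Under the stated mod $n-1$ convention the claim also covers wrapped cases, and these do \emph{not} reduce to the interior argument by complementation. First, for $d=n-2$ the hypothesis is $\{n-2,1\}\subseteq S$: complementation sends degree $1\mapsto n-2$ and $n-2\mapsto 1$, so this configuration is mapped to itself and nothing is gained; moreover Lemma~\ref{Main-Lem} is unavailable because the two degree classes now differ by $n-3$, not $2$, so your interior argument cannot even begin. Second, for $d=n-1$ the hypothesis is $\{n-1,2\}\subseteq S$, and for $d=1$ the needed conclusion is $n-1\in T$: in both, complementation turns degree-$(n-1)$ vertices into isolated vertices, whose degree $0$ lies outside $[n-1]$ and hence outside the $S/T$ framework entirely (the theorem's setup has already discarded isolated vertices), so applying the claim to $\overline{G}$ says nothing about them. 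These cases occupy roughly half of the paper's proof and require bespoke arguments exploiting (near-)universal vertices: for $\{1,n-2\}\subseteq S$ one shows $(\{v_3,v_4\},V(G)\setminus U)$ is complete bipartite and deletes $v_3,v_4$ to create three vertices of degree $0$, and for $\{2,n-1\}\subseteq S$ one deletes a suitable neighbor of a degree-$3$ vertex outside $\{v_3,v_4\}$; nothing in your proposal supplies these. Your appeal to Claim~\ref{0} is also inapt: there the endpoint degrees $1$ and $n-1$ are handled by direct ad hoc deletions, not by complementation. As written, your proof establishes the claim only when all of $d-1,d+1,d+3$ lie in $[n-1]$, which is insufficient for the counting argument at the end of the theorem, where the chains $S_{1i}\subseteq S$ may wrap modulo $n-1$.
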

\begin{proof}
We first prove this for $2\le d\le n-4$. Let $U=\{v_1,v_2,v_3,v_4\}\subseteq V(G)$ with $d(v_1)=d(v_2)=d$ and $d(v_3)=d(v_4)=d+2$. If there exists a vertex $u\in V(G)$ with $d(u)=d+1$, then $U\cup\{u\}$ must contain a feasible set $S_0$ such that $u\in S_0$ in view of Lemma \ref{u3}. It follows from Lemma \ref{pqmaxpq} that we can always delete at most 3 vertices to equate the degrees of the vertices in $S_0$, a contradiction.
By Lemma \ref{Main-Lem}, $U$ doesn't contain balanceable sets, implying that $G[U]$ is an induced path with $v_1,v_2$ as its terminal vertices. Hence, $v_1v_2\notin E(G)$ and $v_3v_4\in E(G)$. If there exists $v_0\in V(G)$ with $d(v_0)=d-1$, then $\{v_0,v_1,v_2\}$ is feasible as $v_1v_2\notin E(G)$ and $d(v_1)=d(v_2)$; if there exists $v_5\in V(G)$ with $d(v_5)=d+3$, then $\{v_3,v_4,v_5\}$ is also feasible as $v_3v_4\in E(G)$ and $d(v_3)=d(v_4)$. In both cases, we could delete at most 2 vertices to make their degrees equal by Lemma \ref{pqmaxpq}, a contradiction.

If $d=1\in S$, then it suffices to check that $n-1\in T$ as the other cases are still valid by the proof above. However, we may also set $v_5\in V(G)$ with $d(v_5)=n-1$ for the case $n-1\in T$ and the desired result follows in a similar way as before.

If $d=n-3\in S$, then we shall only need to check that $1\in T$. This is clearly true as every vertex in $G$ has degree at least 2 on account of $n-1\in S$.

If $d=n-2\in S$, then we have $1\in S$ and we should prove that $\{n-3,n-1,2\}\subseteq T$. Let $U=\{v_1,v_2,v_3,v_4\}\subseteq V(G)$ with $d(v_1)=d(v_2)=1$ and $d(v_3)=d(v_4)=n-2$. Clearly, $G[U]$ is an induced path with $v_1,v_2$ as its terminal vertices, and $(\{v_3,v_4\},V(G)\setminus U)$ is a complete bipartite subgraph. If there exists a vertex $v_0\in V(G)$ with $d(v_0)=2$, then $v_0,v_1,v_2$ have the same degree 0 in the resulting graph by deleting $v_3$ and $v_4$, a contradiction. If there exists a vertex $v_5\in V(G)$ with $d(v_5)=n-3$ or $d(v_5)=n-1$, then by Lemma \ref{pqmaxpq} we can always delete at most 3 vertices to equate the degrees of $v_3,v_4,v_5$ as they form a triangle, a contradiction.

If $d=n-1\in S$, then we have $2\in S$ and we should prove that $\{n-2,1,3\}\subseteq T$. Clearly, $1\in T$.  Let $U=\{v_1,v_2,v_3,v_4\}\subseteq V(G)$ with $d(v_1)=d(v_2)=2$ and $d(v_3)=d(v_4)=n-1$. It is easy to see that $v_3v_4\in E(G)$ and $(\{v_3,v_4\},V(G)\setminus \{v_3,v_4\})$ is a complete bipartite subgraph. If there exists a vertex $v_0\in V(G)$ with $d(v_0)=3$, then $v_0,v_1,v_2$ have the same degree 2 in the resulting graph by deleting some vertex $w\in N(v_0)\setminus \{v_3,v_4\}$, a contradiction. If there exists a vertex $v_5\in V(G)$ with $d(v_5)=n-2$, then by Lemma \ref{pqmaxpq} we can always delete at most 3 vertices to equate the degrees of $v_3,v_4,v_5$ as they form a triangle, a contradiction.
% Since $H$ has edge $v_3v_4$ and ${v_3,v_4,v_5}$ is not feasible, $H[{v_3,v_4,v_5}]$ has only edges $v_3v_4$ and $v_4v_5$ with $N(v_3)\setminus N[v_4]=\emptyset$. But some $v_1$ or $v_2\in N(v_3)\setminus N[v_4]$, a contradiction. Similarly if $H$ has a vertex $v_0$ with $d(v_0)=d-1$, then ${v_0,v_1,v_2}$ cannot be feasible. So $H[{v_0,v_1,v_2}]$ has only edge $v_0v_2$ with $N(v_1)\setminus N(v_2)=\emptyset$. But some $v_3$ or $v_4\in N(v_1)\setminus N(v_2)$, a contradiction.
\end{proof}

Let $S_1=\dot{\cup}_{i=1}^\ell S_{1i}$, where each $S_{1i}$ is a maximal set of the form $\{d,d+2,\ldots,d+2j\}\subseteq S$ for some $d\in S$ and $j\ge1$. Subject to this, we may further assume that $\ell$ is maximal. Let $T_{1i}=\{d-1,d+1,\ldots,d+2j+1\}$ for each $S_{1i}$. Clearly, $T_{1i}\cap T_{1j}=\emptyset$ for $i\neq j\in[\ell]$ by the maximality of $S_{1i}$, and $T_1=\dot{\cup}_{i=1}^\ell T_{1i}\subseteq T$ by Claim \ref{2}. Thus, we conclude that $|T_1|\ge|S_1|$. Set $S_2=S\setminus S_1$ and $T_2=T\setminus T_1$. By the maximality of each $S_{1i}$, we have $T_1\cap T_2=\emptyset$. Moreover, for any two pairs $\{d_1, d_1+1\}\subseteq S_2$ and $\{d_2, d_2+1\}\subseteq S_2$, we have $\{d_1-1, d_1+2\}\cap\{d_2-1, d_2+2\}=\emptyset$ by the maximality of $\ell$. Consequently, we have $|T_2|\ge|S_2|$ by Claims \ref{0} and \ref{1}. This finally implies that $|T|\ge|S|$. Note also that
\[
n-1=|S|+(n-2|S|)+|T|,
\]
i.e. $|T|=|S|-1$. This leads to a contradiction and completes the proof of Theorem \ref{Delete3}.
\end{proof}

\section{Concluding remarks}
In this paper, we give the exact value of $C(3)$ for general graphs. It is still open to determine $C(k)$ for any integer $k\ge4$, and the case analysis becomes signiﬁcantly involved when trying to adapt our arguments even for $C(4)$.
%and the analysis of $C(4)$ seems significantly involved by using a similar argument as in $C(3)$.
There is also an analogous problem about equating maximum degrees initially studied by Caro and Yuster \cite{CY2010}. For any integer $k\ge2$, let $f_k(G)$ denote the minimum number of vertices that we need to delete from $G$ such that the remaining induced subgraph has its maximum degree shared by at least $k$ vertices. Denote by $f_k(n)$ the maximum of $f_k(G)$ as $G$ ranges over all graphs with $n$ vertices. Caro and Yuster \cite{CY2010} posed the following conjecture.
\begin{conj}[Caro and Yuster \cite{CY2010}]
For any fixed integer $k\geq2$, $f_k(n)=\Theta(\sqrt{n})$.
\end{conj}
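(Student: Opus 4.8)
The plan is to establish the two bounds $f_k(n)=O(\sqrt n)$ and $f_k(n)=\Omega(\sqrt n)$ separately, with $k$ treated as a fixed constant (so all hidden constants may depend on $k$). Since the case $k=2$ is due to Caro and Yuster, the real goal is to push both directions to arbitrary fixed $k$; I would reuse their $k=2$ scheme wherever possible and isolate exactly where a larger target multiplicity forces a new idea. Throughout, write $\Delta=\Delta(G)$ for the maximum degree; note that deleting vertices can only decrease degrees, so raising a degree is never an option and every strategy must work by \emph{lowering} the top of the degree sequence.

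For the upper bound I would split into two regimes according to whether the top of the degree sequence has a rich plateau. In the easy regime some level $\tau$ just below $\Delta$ carries many vertices while fewer than $k$ vertices lie strictly above it: here I would delete those fewer-than-$k$ high outliers (each remaining vertex then loses fewer than $k$ neighbours, so the former level-$\tau$ vertices survive in a top window of width less than $k$) and then equalize at least $k$ of them inside this short window, for a total of $O_k(1)$ deletions. The genuinely hard regime is a \emph{sparse top}: near $\Delta$ every degree level carries few vertices and there is no plateau to fall back on, so one is forced to \emph{merge} several sparse levels into a single plateau of size at least $k$ by pushing higher vertices down onto a common target $\tau$. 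I would try to choose $\tau$ minimizing the total excess $\sum_{v:\,d(v)>\tau}\bigl(d(v)-\tau\bigr)$ and to realize the required reductions economically by deleting vertices that are common neighbours of many high-degree vertices, so that one deletion performs many unit reductions; a counting or defect-Hall/flow argument would then be needed to certify that $O(\sqrt n)$ deletions suffice. I expect this sparse-top analysis to be the main obstacle, and I suspect it is precisely why the conjecture is still open for $k\ge3$: one must show that a degree sequence cannot remain sparse over a range of length $\omega(\sqrt n)$ without admitting an efficient merge, the square root arising because accumulating $k$ vertices across consecutive sparse levels forces the excess to grow quadratically in the number of levels descended.

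For the lower bound I would construct, for each fixed $k$, an $n$-vertex graph $G$ realizing exactly this worst case. The construction should produce a spread-out, sparse top: take $m=\Theta(\sqrt n)$ distinguished vertices $u_1,\dots,u_m$ with distinct degrees whose consecutive gaps are $1,2,3,\dots$, so that $d(u_1)-d(u_i)=\binom{i}{2}$ and the degree range $\binom{m}{2}$ fits inside $\{0,\dots,n-1\}$, and wire their neighbourhoods in a laminar (nested) fashion so that deleting any single vertex lowers the degrees of a controlled, correlated block of the $u_i$ while never collapsing two of the prescribed gaps at once. The aim is to prove that any deletion set producing $k$ vertices of equal maximum degree must ``cross'' $\Omega(\sqrt n)$ of these gaps and hence have size $\Omega(\sqrt n)$. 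I would first verify this for $k=2$ (recovering the Caro--Yuster bound) and then attach $k-2$ further forced-distinct top vertices so that the same gap-crossing argument upgrades the bound to general fixed $k$, which would match the upper bound and confirm $f_k(n)=\Theta(\sqrt n)$.
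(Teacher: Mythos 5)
This statement is not a theorem of the paper at all: it is an open conjecture of Caro and Yuster, quoted in the concluding remarks. The paper explicitly records that it has been verified only for $k=2,3$ and that ``the cases $k\geq4$ remain widely open,'' so there is no proof in the paper to compare yours against, and a correct complete argument here would be a new research result, not a reconstruction.

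Judged on its own terms, your proposal is a plan rather than a proof, and the decisive steps are missing in both directions. For the upper bound, everything hinges on the ``sparse top'' regime, and there you write that ``a counting or defect-Hall/flow argument would then be needed to certify that $O(\sqrt n)$ deletions suffice'' --- but that certification \emph{is} the problem; nothing in the proposal supplies it. Concretely, your economizing step assumes you can delete vertices that are common neighbours of many high-degree vertices, yet the high-degree vertices may have pairwise almost disjoint neighbourhoods, in which case each deletion lowers only $O(1)$ of the relevant degrees; worse, deletions aimed at pushing outliers down onto a target level $\tau$ also lower the degrees of the intended plateau vertices themselves, a feedback effect your scheme never controls. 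For the lower bound, the nested-neighbourhood construction with gaps $1,2,3,\dots$ is essentially the known $k=2$ idea, but the key claim --- that any deletion set producing $k$ equal maximum degrees must ``cross'' $\Omega(\sqrt n)$ gaps --- is asserted, not proved: a deletion set is free to equalize the maximum degree far below the distinguished vertices, collapsing many gaps simultaneously, and one would have to rule this out. The final step of ``attaching $k-2$ further forced-distinct top vertices'' is likewise only a hope; adding such vertices perturbs all other degrees and there is no argument that the gap structure survives. So the proposal identifies the right difficulties but resolves none of them; the conjecture remains open for $k\ge4$.
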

The authors solved this conjecture for $k=2, 3$, and the cases $k\geq4$ remain widely open. Particularly, we mention that the exact value of $f_2(n)$ was solved by Caro, Lauri and Zarb \cite{CLZ2018}. However, the problem for the exact value of $f_k(n)$ is widely open for any $k\geq3$, and is still active.  We refer the reader to \cite{CLZ2019,CY2020, FGJ, GP2018} for more problems and results in this area.

\end{document}